\newcommand\rres{\mathbin{\backslash}}
\newcommand\lres{\mathbin{/}}
\newcommand\RRS{{\sf RRS}}
\newcommand\RS{{\sf RS}}
\newtheorem{theorem}{Theorem}[section]
\newtheorem{rem}[theorem]{Remark}
\newtheorem{problem}[theorem]{Problem}
\newtheorem{defin}[theorem]{Definition}
\newenvironment{definition}{\begin{defin}\rm}{\end{defin}}
\title[FMP for RS]{Finite Model Properties for Residuated Semigroups}
\author{Szabolcs Mikul\'as}
\email{miksza001@gmail.com}
\date{21/11/2022}
\begin{document}

\maketitle

\begin{abstract}
We have a quick look at various finite model properties for residuated semigroups.
In particular, we solve Problem~19.17 from the monograph by Hirsch and Hodkinson~\cite{HH-rel-02}.
\end{abstract}


\section{Introduction}\label{sec-int}

Residuated semigroups have been investigated on their own right, but also in connection with susbstructural logics, like the 
Lambek Calculus (LC) Lambek~\cite{lam-mat-58}, and relevance logics Anderson et al.~\cite{ABD-ant-92}.
Indeed, relational semantics, i.e.\ models based on binary relations, have been proposed for these substructural logics, see
van Benthem~\cite{ben-lan-91} and Routley and Meyer~\cite{RM-sem-73},
and the algebraic settings of these relational semantics give rise to residuated semigroups (and their expansions to larger similarity types).
For instance, representable residuated semigroups (see below) provide sound and complete semantics for LC, 
see Andr\'eka and Mikul\'as~\cite{AM-lam-94}.  
The connection between (fragments of) relevance logics and (variants of) residuated semigroups have been investigated
in e.g.\ Bimb\'o et al.~\cite{BDM-rel-09}, Maddux~\cite{mad-rel-10}, Hirsch and Mikul\'as~\cite{HM-pos-11} and Mikul\'as~\cite{mik-alg-09},\cite{mik-low-15}.

In this note, we will concentrate on representable residuated semigroups with finite bases (see below) and show that they have smaller expressive power
than representable semigroups in general. In particular, we show that there is a finite representable residuated semigroup that is not isomorphic to any
representable residuated semigroup with a finite base, thus providing a solution to Problem~19.17 of Hirsch and Hodkinson~\cite{HH-rel-02}.

Next we define (representable) residuated semigroups.
Strictly speaking, these are not algebras but algebraic structures (because of the ordering $\le$) but the reader should not have any problem with applying the usual algebraic techniques (like taking subalgebras) to algebraic structures.

\begin{definition}\label{def:rs}
A \emph{residuated semigroup} ${\mathcal A}=(A,\le,\circ, \rres,\lres)$ is an algebra such that the following are satisfied:
\begin{itemize}
\item
$\le$ is a reflexive ordering on $A$,
\item 
$(A, \circ)$ is a semigroup,
\item
$\circ$ is monotonic w.r.t.\ $\le$,
\item
$\rres,\lres$ are the right and left residuals of $\circ$:
\begin{equation}\label{eq:residuals}
y\le x\rres z\text{ iff } x\circ y\le z\text{ iff } x\le z\lres y
\end{equation}
for every $x,y,z$ in $A$.
\end{itemize}
We denote the class of residuated semigroups by $\RS$.
\end{definition}

Next we define subclasses of $\RS$ that are isomorphic to algebras of binary relation and the relation $\le$ and operations $\circ,\rres,\lres$ have ``relational'' interpretations.

\begin{definition}\label{def:rrs}
A \emph{representable residuated semigroup} ${\mathcal A}=(A,\le,\circ, \rres,\lres)$ is a subalgebra of some
$(\wp(W),\subseteq, \circ_W,\rres_W,\lres_W)$ where $W$ is a transitive relation, $\subseteq$ is the subset relation and the operations are interpreted as follows:
\begin{align}
x\circ_W y&=\{(u,v)\in W: 
(u,w)\in x \mbox{ and } (w,v)\in y \mbox{ for some } w\},\\
x\rres_W y&=\{(u,v)\in W:
\text{for every } w, (w,u)\in x\text{ implies }(w,v)\in y\},\\
x\lres_W y&=\{(u,v)\in W:
\text{for every } w, (v,w)\in y\text{ implies }(u,w)\in x\},
\end{align}
see Figure~\ref{fig:comp}.
\begin{figure}[h]
\[
\xymatrix{
&{\exists w}\ar@{-->}[ddr]^y&&&
{\forall w}\ar@{-->}[ddl]_x\ar@{-->}[ddr]^y&&&{\forall w}&\\
&{\land}& &&{\rightarrow}&&&{\leftarrow}&\\
{u}\ar@{-->}[uur]^x\ar[rr]_{x\circ_W y}&&{v}&
{u}\ar[rr]_{x\rres_W y}&&{v}&
{u}\ar[rr]_{x\lres_W y}\ar@{-->}[uur]^x&&{v}\ar@{-->}[uul]_y
}
\]
\caption{Interpretations of composition and its residuals}\label{fig:comp}
\end{figure}
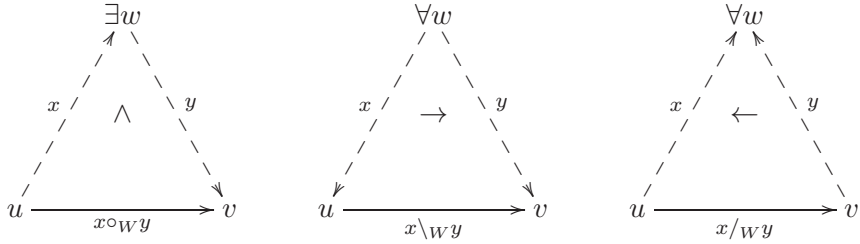
Let  $U$ be the smallest set such that $W\subseteq U\times U$. We call $U$ the \emph{base} and $W$ the \emph{unit} 
of $\mathcal A$.
We denote the class (of isomorphs) of representable residuated semigroups by $\RRS$.

Let $\RRS^\Box$ be the ``square'' subclass of $\RRS$, where we additionally require that $W=U\times U$.
\end{definition}
The subscript $W$ to the operations of representable algebras is to remind the reader that their meanings depend on the choice of the unit $W$ of the algebra. In particular, if the unit $W$ is an irreflexive relation, then $(u,u)\notin x\backslash_W x$ for any element $u$ in the base and algebra element $x$, in stark contrast to $(v,v)\in y\backslash_V y$ for any base element $v$ and algebra element $y$ for a reflexive unit $V$.

\begin{definition}\label{def:frp}
let $\sf K$ be a class of representable algebras (like $\RRS$ or $\RRS^\Box$).

We say that ${\mathcal A}\in{\sf K}$ is \emph{representable on a finite base}, or \emph{finitely representable}, if the base $U$ can be chosen to be finite in Definition~\ref{def:rrs}.

The class $\sf K$ has the \emph{finite representation property} (frp) if every finite ${\mathcal A}\in{\sf K}$ is finitely representable.
\end{definition}

\section{The main result}\label{sec-proof}
Next we show that $\RRS$ lacks the frp.

\begin{theorem}\label{thm-main}
There is a finite ${\mathcal A}\in \RRS$ that cannot be represented on a finite base.
\end{theorem}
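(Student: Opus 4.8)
The strategy is to find a finite residuated semigroup $\mathcal A$ that is representable, but any representation requires an infinite base. The natural source of such examples is the interplay between the residuals and composition: the residuals are governed by universally quantified conditions, so they can force the base to contain infinitely many ``witnesses'' that are pairwise distinguished by the algebra elements. The plan is to exploit the transitivity of the unit $W$ together with the residual laws to build an infinite chain of points that cannot be collapsed.

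First I would isolate a small finite set of algebra elements — call them $a, b, \ldots$ together with $0$ (the least element, i.e.\ the empty relation), the identity-like element, and perhaps a top element — and write down a finite multiplication table for $\circ$ and the two residuals $\rres,\lres$, checking that the residuation law~\eqref{eq:residuals} holds and that $\circ$ is associative and monotone, so that $\mathcal A\in\RS$. This is the routine but delicate bookkeeping step. Second, I would verify $\mathcal A\in\RRS$ by exhibiting one (necessarily infinite-based) representation: typically one takes $W$ to be a transitive relation such as the strict order on $\mathbb N$ or on $\mathbb Z$, or a tree, and assigns to each generator an appropriate relation; the transitivity of $W$ is what makes $\circ_W$ behave, and the infinite spread of the order is what separates the elements. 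Third — and this is the crux — I would prove that no finite base works. The argument here is the real content: assuming a representation on a finite base $U$ with unit $W\subseteq U\times U$, one uses the defining properties of the chosen elements (an equation or inequality that fails, something like $a\circ a\not\le a$ while $a\le a\rres a$, forcing a strictly increasing sequence of distinct points under a relation contained in a transitive $W$) to extract an infinite strictly $W$-ascending sequence in $U$; since $W$ is transitive and, by the chosen inequalities, also irreflexive along this sequence, the points must be pairwise distinct, contradicting finiteness of $U$.

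The main obstacle is the third step: cooking up the algebra so that finiteness of the base is genuinely impossible, rather than merely inconvenient. One must choose the generators and the relations among them so that (i) the algebra stays finite and genuinely lies in $\RRS$ (so a concrete infinite representation exists), yet (ii) the relational meaning of the residuals, combined with transitivity of $W$, forces an infinite irreflexive chain. A clean way to organize (ii) is to find algebra elements whose required behaviour under $\circ_W$ and $\rres_W$ mimics the ``successor vs.\ no last element'' phenomenon: an element that, at every point of the chain, points strictly forward but never backward, together with a residual condition ruling out loops. I would also keep in mind the contrast flagged after Definition~\ref{def:rrs} — whether $W$ is reflexive or irreflexive drastically changes $x\rres_W x$ — and engineer the example so that the finite-base assumption forces $W$ to be irreflexive on the relevant points, which is exactly what precludes a finite transitive $W$ supporting the chain. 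Once the combinatorial core is right, both the $\RS$-axioms and the concrete infinite representation should follow by direct verification.
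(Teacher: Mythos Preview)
Your three-step outline (build a finite table, exhibit an infinite representation, rule out finite bases) matches the paper's structure, and you are right that the reflexive/irreflexive behaviour of $x\rres_W x$ is the hinge. But the concrete mechanism you sketch in step~(ii)--(iii) does not work and has the polarity reversed.

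First, the specific pair of conditions you propose, ``$a\circ a\not\le a$ while $a\le a\rres a$'', is inconsistent: by the residuation law~\eqref{eq:residuals}, $a\le a\rres a$ is equivalent to $a\circ a\le a$. So that combination cannot hold in any residuated semigroup, let alone drive the argument. Second, your plan is to force $W$ to be \emph{irreflexive} along a chain and then say that precludes finiteness; but a finite transitive irreflexive relation is just a finite strict partial order, which certainly exists. Irreflexivity alone is harmless. What actually bites is \emph{density}: if $a$ is nonempty, transitive \emph{and} dense (i.e.\ $a=a\circ a$), then any representation of $a$ on a finite base must contain a cycle, hence a reflexive point of the unit. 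The direction of the argument is therefore the opposite of what you wrote: finiteness forces a reflexive point, and the algebra must be engineered to \emph{detect} that reflexive point and produce a contradiction.

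The paper does exactly this. It takes $a$ to be a dense linear order (strict order on $\mathbb Q$), so $a=a\circ_W a$, and introduces a second generator $b$ whose sole purpose is that $b\rres_W b$ is small: in the infinite representation $b\rres_W b$ misses every pair that $a$ can reach on the left, so $a\circ_W(b\rres_W b)\circ_W a=\bot$ and in particular $a\not\le a\circ_W(b\rres_W b)\circ_W a$. In any representation on a finite base, density of $a$ produces arbitrarily long $a$-paths, pigeonhole gives a repeated point $z_i=z_j$, transitivity of $a$ gives $(z_i,z_i)\in a\subseteq V$, and then automatically $(z_i,z_i)\in b\rres_V b$ (any reflexive point of the unit lies in every $x\rres_V x$). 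Hence $a\le a\circ_V(b\rres_V b)\circ_V a$ in the finite representation, contradicting the inequality that holds in $\mathcal A$. So the missing idea in your plan is: make $a$ dense and transitive (not $a\circ a\not\le a$), and use a second generator $b$ so that $b\rres b$ serves as a reflexivity detector.
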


\begin{proof}
Let $U= ({\mathbb Q}\times \{0\})\cup ({\mathbb Q}\times \{1\})$ and
$$
W=\{((q,0),(r,0)):q,r\in{\mathbb Q},q<r\}\cup\{((q,1),(r,0)):q,r\in{\mathbb Q},q\le r\}
$$
where $\mathbb Q$ denotes the set of rational numbers.
Define
\begin{align}
a&=\{((q,0),(r,0)):q,r\in{\mathbb Q}, q<r\},\\
b&=\{((q,1),(q,0)):q\in{\mathbb Q}\}.
\end{align}
Let ${\mathcal A}=(A, \le,\circ_W,\backslash_W,/_W)$ be the subalgebra of $(\wp(W),\subseteq,\circ_W,\backslash_W,/_W)$ generated by $a, b$.
$\mathcal A$ has the following seven elements: bottom element $\emptyset$ (that we will denote by $\bot$), top element $W$ (that we will denote by $\top$), $a$, $b$,
$$
b\circ_W a=\{((q,1),(r,0)):q,r\in{\mathbb Q}, q< r\},
$$
$a\cup b\circ_W a$ (that we will denote by $a'$) and $b\cup b\circ_W a$ (that we will denote by $b'$).\footnote{Note that $\cup$ is not an operation of $\mathcal A$, but it makes sense to take the union of two elements of an algebra of binary relations.}
We include the tables for the operations (where we abbreviate $b\circ_W a$ as $ba$).

\begin{table}
\begin{center}
\begin{tabular}{| c || c | c | c | c | c | c | c |}
\hline
$\circ_W$ & $a$ & $b$ & $\bot$ & $ba$ & $\top$ & $b'$ & $a'$ \\
\hline \hline
$a$ & $a$ & $ba$ & $\bot$ & $ba$ & $a'$ & $ba$ & $a'$ \\
\hline
$b$ & $\bot$ & $\bot$ & $\bot$ & $\bot$ & $\bot$ & $\bot$ & $\bot$ \\
\hline
$\bot$ & $\bot$ & $\bot$ & $\bot$ & $\bot$ & $\bot$ & $\bot$ & $\bot$ \\
\hline
$ba$ & $\bot$ & $\bot$ & $\bot$ & $\bot$ & $\bot$ & $\bot$ & $\bot$ \\
\hline
$\top$ & $a$ & $ba$ & $\bot$ & $ba$ & $a'$ & $ba$ & $a'$ \\
\hline
$b'$ & $\bot$ & $\bot$ & $\bot$ & $\bot$ & $\bot$ & $\bot$ & $\bot$ \\
\hline
$a'$ & $a$ & $ba$ & $\bot$ & $ba$ & $a'$ & $ba$ & $a'$ \\
\hline
\end{tabular}
\end{center}
\caption{Table for composition}\label{tab:comp}
\end{table}

\begin{table}
\begin{center}
\begin{tabular}{| c || c | c | c | c | c | c | c |}
\hline
$\rres_W$ & $a$ & $b$ & $\bot$ & $ba$ & $\top$ & $b'$ & $a'$ \\
\hline \hline
$a$ & $\top$ & $b'$ & $\top$ & $b'$ & $b'$ & $b'$ & $b'$ \\
\hline
$b$ & $b'$ & $b'$ & $\top$ & $b'$ & $b'$ & $b'$ & $b'$ \\
\hline
$\bot$ & $b'$ & $b'$ & $\top$ & $b'$ & $b'$ & $b'$ & $b'$ \\
\hline
$ba$ & $b'$ & $\top$ & $\top$ & $\top$ & $b'$ & $\top$ & $b'$ \\
\hline
$\top$ & $\top$ & $\top$ & $\top$ & $\top$ & $\top$ & $\top$ & $\top$ \\
\hline
$b'$ & $b'$ & $\top$ & $\top$ & $\top$ & $b'$ & $\top$ & $b'$ \\
\hline
$a'$ & $\top$ & $\top$ & $\top$ & $\top$ & $\top$ & $\top$ & $\top$ \\
\hline
\end{tabular}
\qquad
\begin{tabular}{| c || c | c | c | c | c | c | c |}
\hline
$\lres_W$ & $a$ & $b$ & $\bot$ & $ba$ & $\top$ & $b'$ & $a'$ \\
\hline \hline
$a$ & $a$ & $\bot$ & $\bot$ & $b'$ & $\top$ & $b'$ & $\top$ \\
\hline
$b$ & $\top$ & $\top$ & $\top$ & $\top$ & $\top$ & $\top$ & $\top$ \\
\hline
$\bot$ & $\top$ & $\top$ & $\top$ & $\top$ & $\top$ & $\top$ & $\top$ \\
\hline
$ba$ & $\top$ & $\top$ & $\top$ & $\top$ & $\top$ & $\top$ & $\top$ \\
\hline
$\top$ & $a$ & $\bot$ & $\bot$ & $b'$ & $\top$ & $b'$ & $\top$ \\
\hline
$b'$ & $\top$ & $\top$ & $\top$ & $\top$ & $\top$ & $\top$ & $\top$ \\
\hline
$a'$ & $a$ & $\bot$ & $\bot$ & $b'$ & $\top$ & $b'$ & $\top$ \\
\hline
\end{tabular}
\end{center}
\caption{Tables for the residuals}\label{tab:lres}
\end{table}

Observe that
\begin{align}
a&=a\circ_W a\\
a&\not\le a\circ_W(b\backslash_W b)\circ_W a\label{eq:bb}
\end{align}
since $a$ is a transitve and dense relation, and $b\backslash_W b=b'$ and $a\circ_W b'=\bot$ whence 
$$
a\circ_W(b\backslash_W b)\circ_W a=a\circ_W b'\circ_W a=\bot\circ_W a=\bot\not\ge a
$$ 
in ${\mathcal A}$.

Next assume that ${\mathcal A}$ can be represented on a finite base.
That is, we have a finite set $U'$ and a transitive relation $V\subseteq U'\times U'$ such that $\mathcal A$ is isomorphic to
${\mathcal B}=(B, \le,\circ_V,\backslash_V,/_V)\subseteq (\wp(V),\subseteq,\circ_V,\backslash_V,/_V)$.

Let $(x,y)\in a$ in $\mathcal B$.  We show that $(x,y)\in a\circ_V(b\backslash_V b)\circ_V a$.
Since $a$ is dense ($a\le a \circ_V a$), we have $z\in U'$ (not necessarily distinct from $x,y$)
such that $(x,z), (z,y)\in a$. Continuing in the same vein (using $(x,z), (z,y)\in a\le a \circ_V a$, etc.) we can create an
$a$-path of elements $(x=z_0,z_1,\dots, z_i,\dots, z_{n-1},z_n=y)$ of unlimited length (i.e.\ each $(z_i,z_{i+1})\in a$). Since $U'$ is finite, after finitely many iterations of the process of expanding the path we get that $z_i=z_j$ for two elements of the path. Since $a$ is transitive ($a\circ_V a\le a$), we have $(x,z_i),(z_i,z_j),(z_j,y)\in a$. 
Thus $(z_i,z_j)=(z_i,z_i)\in V$, whence $(z_i,z_i)\in b\backslash_V b$.  We get that $(x,y)\in a\circ_V(b\backslash_V b)\circ_V a$, contradicting that $\mathcal B$ should satisfy \eqref{eq:bb}.
\end{proof}

\section{Conclusions}\label{sec-conc}
The above proof shows that $\RRS$ does not have the \emph{finite base property}  (fbp) w.r.t.\  implications:
the implication
\begin{equation}\label{eq:imp}
(a\le a\circ a \land a\circ a\le a) \to a\le a\circ b\backslash b\circ a
\end{equation}
is not $\RRS$-valid but cannot be refuted in any $\RRS$ with finite base.

The solution to the following problem would answer the question of the \emph{finite relational model property} (frmp) for the LC:
whether non-derivable sequents of LC are refutable in relational models with finite bases.
\begin{problem}
Does $\RRS$ have the finite base property w.r.t.\  atomic formulas $t\le s$ (where $t,s$ are $\{\circ,\backslash,/\}$-terms of variables).
\end{problem}

We can ask the same problems for the square version $\RRS^\Box$ of $\RRS$.
These problems are related to that version of the LC that allows the empty word and sequents with empty antecedents \cite{lam-cal-61}.
The corresponding $\RRS$ is a class with reflexive units $W$, thus every $x\rres_W x$ and $x\lres_W x$  subsume the identity relation,
see Andr\'eka and Mikul\'as~\cite{AM-lam-94}.
Thus the proof of Theorem~\ref{thm-main} does not work in this case.
\begin{problem}
Does $\RRS^\Box$ have the finite representation and finite base properties?
\end{problem}

\paragraph
{\textbf{Acknowledgements:}
Thanks are due to Robin Hirsch and Ian Hodkinson for useful discussions.}


\begin{thebibliography}{MMMMM}


\bibitem[ABD92]{ABD-ant-92}
{A.R. Anderson, N.D. Belnap and J.M. Dunn},
{\it Entailment. The Logic of Relevance and Necessity. Vol.\ II.},
Princeton University Press, 1992.



\bibitem[AM94]{AM-lam-94}
{H. Andr\'eka and Sz.\ Mikul\'as},
``Lambek calculus and its relational semantics: 
completeness and incompleteness'',
{\it Journal of Logic, Language and Information},
3:1--37, 1994.




\bibitem[vB91]{ben-lan-91}
{J. van Benthem},
{\it Language in Action},
North-Holland, 1991.

\bibitem[BDM09]{BDM-rel-09}
{K. Bimb\'o, J.M. Dunn and R.D. Maddux},
``Relevance logic and relation algebras'',
{\it Review of Symbolic Logic}, 2(1):102--131, 2009. 

















\bibitem[HH02]{HH-rel-02}
{R. Hirsch and I. Hodkinson},
{\it Relation Algebras by Games},
{North-Holland}, 2002.







\bibitem[HM11]{HM-pos-11} 
{R. Hirsch and Sz. Mikul\'as},  
``Positive fragments of relevance logic and algebras
of binary relations'',  
\emph{Review of Symbolic Logic}, 4(1):81--105, 2011.








\bibitem[La58]{lam-mat-58}
{J. Lambek},
``The mathematics of sentence structure'',
{\it American Mathematical Monthly},
65:154--170, 1958.


\bibitem[La61]{lam-cal-61}
{J. Lambek}, 
``On the calculus of syntactic types'', in R. Jakobson (editor), 
{\it Structure of Language and Its Mathematical Aspects}, pages 166--178, AMS, 1961.





\bibitem[Ma10]{mad-rel-10}
{R.D. Maddux},
``Relevance logic and the calculus of relations'',
{\it Review of Symbolic Logic}, 3(01):41--70, 2010.







\bibitem[Mi09]{mik-alg-09}
{Sz. Mikul\'as},
``Algebras of relations and relevance logic'',
{\it Journal of Logic and Computation}, 19:305--321, 2009.


\bibitem[Mi15]{mik-low-15}
{Sz. Mikul\'as}, 
``Lower Semilattice-Ordered Residuated Semigroups and Substructural Logics'',
{\it Studia Logica}, 103:453-478, 2015.

 



\bibitem[RM73]{RM-sem-73}
{R. Routley and R.K. Meyer},
``The semantics of entailment (I)'',
in H. Leblanc (editor), {\it Truth, Syntax and Modality}, pages 199--243, 
North-Holland, 1973.



\end{thebibliography}
\end{document}